\documentclass[12pt,reqno]{article}
\UseRawInputEncoding 

\usepackage[top=2.5cm,bottom=2.5cm,left=2.5cm,right=2.5cm]{geometry}
\usepackage{graphicx}
\usepackage{latexsym}
\usepackage{stmaryrd}
\usepackage{amsmath,amssymb} 
\usepackage{amscd}
\usepackage[arrow,matrix]{xy}
\usepackage{graphicx}
\usepackage{xcolor}
\usepackage{times}
\usepackage[]{subfigure}
 \usepackage{cite}
 \usepackage{float}

\usepackage{hyperref}

\usepackage[colorinlistoftodos]{todonotes}
\newcommand\on{\operatorname}
\renewcommand\div{\on{div}}

\newcommand\Ric{\on{Ric}}

\usepackage{amscd,amsmath,amsthm,amssymb}

\theoremstyle{plain}
\newtheorem{theorem}{Theorem}[section]
\newtheorem{proposition}[theorem]{Proposition}
\newtheorem{corollary}[theorem]{Corollary}
\newtheorem{lemma}[theorem]{Lemma}

\theoremstyle{definition}

\newtheorem{definition}[theorem]{Definition}

\begin{document}

\title{Integral formulas  in  $h$-Almost Ricci-Bourguignon Solitons}
\author {{Abdou Bousso$^{1}$}\thanks{{
 E--mail: \texttt{abdoukskbousso@gmail.com} (A. Bousso)}},\texttt{ }Moctar Traore$^{2}$\footnote{{
 E--mail: \texttt{moctar.traore@ogr.iu.edu.tr} (M. Traore)}}, \texttt{ }Ameth Ndiaye$^{3}$\footnote{{
 E--mail: \texttt{ameth1.ndiaye@istanbul.edu.sn} (A. Ndiaye)}}, \\
 \begin{small}{D\'epartement de Math\'ematiques et Informatique, FST, Universit\'e Cheikh Anta Diop,  \.Dakar, S\'en\'egal.}\end{small}\\ 
\begin{small}{$^{2}$\.{I}stanbul University, Faculty of Science, Department of Mathematics, Vezneciler, 34134, \.{I}stanbul, Turkey.}\end{small}\\
\begin{small}{$^{3}$D\'epartement de Math\'ematiques, FASTEF, Universit\'e Cheikh Anta Diop,  \.Dakar, S\'en\'egal.}\end{small}}
\date{}
\maketitle%


\begin{abstract} 
 The aim of this paper is to investigate some integral formulas for compact gradient $h$-almost Ricci-Bourguignon solitons. Consequently, we  generalize the results previously obtained for Ricci almost solitons.  Moreover, we prove that a compact, non-trivial $h$-almost Ricci-Bourguignon soliton with dimension greater than or equal to $3$ is isometric to a Euclidean sphere, provided either the potential vector field is conformal or its scalar curvature is constant. Finally, we generalize the integral formula for compact $h$-almost Ricci-Bourguignon.  
\end{abstract}
\begin{small} {\textbf{MSC:} 53C25, 53C21, 53E20.}
\end{small}\\
\begin{small} {\textbf{Keywords:} $h$-almost Ricci-Bourguignon solitons, conformal vector fields, integral formulas,  Killing vector fields.} 
\end{small}\\
\maketitle

\section{Introduction}
The concept  of Ricci-Bourguignon flow was initially  introduced by Jean  Pierre Bourguignon in \cite{bib10}. The Ricci-Bourguignon flow  is defined as  an extension of Ricci flow \cite{bib18}.   Richard S. Hamilton \cite{bib17} defined Ricci solitons as a self-similar solutions of Ricci flow. A plethora of examples were presented to elucidate the generalizations and particularizations of gradient 
Ricci-Bourguignon solitons. Then Catino-Mazzieri \cite{bib14} investigated a significant result concerning Einstein manifolds. The objective was to provide the well-known results for rotationally symmetric and asymptotically cylindrical Einstein manifolds in compact or noncompact conditions.     %
 Therefore,  Thomas Ivey \cite{bib19}  investigated a complete classification for Ricci solitons on compact three-manifolds. In a gradient shrinking soliton, Ni and  Wallach  \cite{bib22} studied a complete manifold which provides an alternate proof to a result of Perelman \cite{bib23}. Perelman's work has demonstrated that a compact Ricci soliton is invariably gradient. Indeed,  the existence of gradient Ricci solitons is not guaranteed in the noncompact case  in the non compact case  \cite{bib20}. 
Moreover   for a compact gradient almost Ricci-Bourguignon soliton, some  integral formulas were derived in \cite{bib16, Chen, bib24}.  
As a consequence of integral formula, the author investigated that there exists an isometry between Euclidean sphere and compact gradient soliton  when the scalar curvature is constant  or  the related potential vector field of the manifold is conformal. Also  a new type of Ricci solitons namely Ricci-Bourguignon $h$-almost solitons were studied in  \cite{Gom}. \\

Besides, several authors studied the almost Ricci-Bourguigon solitons (\cite{Mk, Bic, bib30, Filo}).    
Moreover, generalization results of  Ricci solitons were given in  \cite{bib16}. In this study, inspiring the work of Ricci almost solitons, he initiated  the concept of almost Ricci-Bourguignon solitons. He  gave  some important results which were qualified as the generalizing  results for Ricci almost solitons. 
The almost $\eta$-Ricci-Bourguignon solitons provided some special potential vector fields  on a doubly warped product was studied in (\cite{bib6}). The almost $\eta$-Ricci-Bourguignon solitons on compact and non compact case with some special vector field were investigated by Traore et al. (\cite{bib30}). 
 In addition, Barros and Riberio (\cite{bib8, bib9}) presented an integral formula for the compact almost Ricci solitons and generalized $m$-quasi Einstein metrics. 
 Further, Gomes et al. \cite{Gom}  studied the $h$-almost Ricci soliton, where they showed that there exists an isomety between a standard sphere and compact nontrivial case.  \\

Motivated by the above works, in this paper, we study $h$-almost Ricci-Bourguignon solitons on Riemanian manifolds.
The paper is organized as follow: in Sect. $2$, we start with an impportant lemma which allows us to prove that a compact $h$-almost Ricci-Bourguignon solitons with potential vector field satisfying the Hodge de De-Rham decomposition theorem is Killing vector field. Then with some integral friomulas, we prove that a compact, non-trivial $h$-almost Ricci-Bourguignon solitons with dimension greater than or equal to $3$ is isometric to an Euclidean sphere, provided either is potential vector field is conformal or its  scalar curvature  is constant. At the end, we generalize the integral formula for compact $h$-almost Ricci-Bourguignon.

\section{Preliminaries}

In this part, we recall the fundamental definitions and concepts for the further study. 
 Let  $(M^n, g)$ be an  $n$-dimensional Riemannian manifold, then we defined on $M^n$ the {\it Ricci-Bourguignon solitons} as a  self-similar solutions to {\it Ricci-Bourguignon flow} \cite{bib13*} defined by:
\begin{equation} \label{eq1}
\frac{\partial }{\partial t}g(t)=-2(\Ric-\rho R g), 
\end{equation}
where $R$ is the scalar curvature of the Riemannian metric $g$, $\Ric$ is the Ricci curvature tensor of the metric,  and $\rho$ is a real constant. 
 When  $\rho=0$ in ~\eqref{eq1}, then  we get a Ricci flow. 
Remark that for some  values of $\rho$ in equation~\eqref{eq1}, $\Ric-\rho Rg$ evolve into  the following situations:  \\

\textbf (1)  If  $\rho=\frac{1}{2},$ then it is an Einstein tensor $\displaystyle\Ric-\frac{R}{2}g$, 

\textbf (2)   If  $\rho=\frac{1}{n},$ then it is a traceless Ricci tensor $\displaystyle\Ric-\frac{R}{n}g$.

\begin{definition}\cite{bib2}
Let  $(M^n, g)$  be a  Riemannian manifold of dimension $n \geq 3$. Then it  is called  Ricci-Bourguignon soliton if
\begin{equation} \label{eq2}
\Ric+\frac{1}{2}\mathcal{L}_{\xi}g=(\lambda+\rho R)g,
\end{equation}
where  $\mathcal{L}_{\xi}$ denotes the Lie derivative operator along the vector field $\xi$ which is called soliton or potential, $\rho$  and  $\lambda$ are real constants and the soliton structure is  denoted by  $(M, g, \xi, \lambda)$. If $\rho$  and  $\lambda$ are smooth functions, then it  is called almost Ricci-Bourguignon soliton
\end{definition}

If  $\lambda<0,$ $\lambda=0$ or  $\lambda>0$,  then the Ricci-Bourguignon soliton  is called   expanding, steady or  shrinking  respectively.

\begin{definition} \cite{Haj}
Let $(M^n, g)$ be a Riemannian manifold. Then it is called  $h$-almost  Ricci-Bourguignon soliton  if there exist a vector field $\xi$,  the smooth functions $\lambda$ and $h$  such that 
\begin{equation} \label{eq3h}
\Ric+\frac{h}{2}\mathcal{L}_{\xi}g=(\lambda+\rho R)g,
\end{equation}
where $\pounds_{\xi}g$ denotes the Lie derivative of the metric $g$ along $\xi$. 
Hence it is  denoted by $(M^n, g, \xi, \lambda, \rho)$.
\end{definition}
Therefore, if $\xi=\nabla f$, we get a gradient $h$-almost Ricci-Bourguignon soliton.  Hence,  equation \eqref{eq3h} is rewritten as
\begin{equation} \label{eq4h}
\Ric+h\nabla^2 f=(\lambda+\rho R)g,
\end{equation}
here $\nabla^2 f$ is the Hessian of $f$.
Thus for $\rho=\frac{1}{2}$, we have a  gradient $h$-almost  Einstein soliton  and for $\rho=\frac{1}{n}$, we get a gradient  $h$-almost traceless Ricci soliton. \\

 If $f$ is constant, then the gradient $h$-almost Ricci-Bourguignon soliton is called  trivial. On the other hand, for  a nontrivial gradient $h$-almost Ricci-Bourguignon soliton $f$  is not trivial. Hence  a gradient $h$-almost Ricci-Bourguignon soliton is  an Einstein manifold when $n\geq3$ and $f$ is constant.


On a Riemannian manifold the following well-known lemma holds  \cite{bib7}
\begin{lemma} \label{lemi1}
Let  $S$ is a $(0, 2)$-tensor on a Riemannian manifold  $(M^n, g)$. Then we have 
\begin{equation}
\div(S(\phi \xi))=\phi(\div S)(\xi)+\phi \langle\nabla \xi, S\rangle+S(\nabla \phi, \xi),
\end{equation}
for all vector field $\xi$ and  $\phi$ a smooth function on $M$.   
\end{lemma}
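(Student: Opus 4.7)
The plan is to prove the identity by direct computation, working in a local orthonormal frame and repeatedly applying the Leibniz rule. First I would fix the meaning of the notation: $S(\phi\xi)$ denotes the vector field $Y$ metrically dual to the one-form $Z\mapsto S(Z,\phi\xi)$, so that in a local orthonormal frame $\{e_i\}$ at a point one has $Y=\sum_i S(e_i,\phi\xi)\,e_i=\phi\sum_i S(e_i,\xi)\,e_i$. The divergence is then $\div Y=\sum_i g(\nabla_{e_i}Y,e_i)$.

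The key computational step is to split $\nabla_{e_i}Y$ via the product rule on the factor $\phi$. The derivative landing on $\phi$ contributes $\sum_i e_i(\phi)\,S(e_i,\xi)=S(\nabla\phi,\xi)$ (using that $S$ is symmetric, as will be the case in every application in the paper, where $S$ is the Ricci tensor or a Hessian), which gives the third summand on the right-hand side. The remaining piece reduces to $\phi\cdot\div(S(\xi))$.

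It then remains to establish the auxiliary identity $\div(S(\xi))=(\div S)(\xi)+\langle\nabla\xi,S\rangle$. For this I would expand $\nabla_{e_i}(S(\xi))$ tensorially so that the covariant derivative acts separately on $S$ and on $\xi$; tracing against $e_i$ produces $\sum_i(\nabla_{e_i}S)(e_i,\xi)=(\div S)(\xi)$ together with $\sum_i S(e_i,\nabla_{e_i}\xi)$, and the latter is exactly the Frobenius pairing $\langle\nabla\xi,S\rangle$ under the standard identification of $\nabla\xi$ with a $(0,2)$-tensor via $(\nabla\xi)(X,Y)=g(\nabla_X\xi,Y)$. Collecting the three contributions yields the stated formula.

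The argument is a routine exercise in tensor calculus and there is no serious obstacle; the only real care needed is bookkeeping, in particular tracking which slot of $S$ is contracted in each term and ensuring that the sign and index conventions for $\div S$ and $\langle\nabla\xi,S\rangle$ agree with those used elsewhere in the paper.
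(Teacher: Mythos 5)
Your argument is correct: it is the standard Leibniz-rule computation in a local orthonormal frame, first peeling off the factor $\phi$ (giving $S(\nabla\phi,\xi)$ plus $\phi\,\div(S(\xi))$) and then expanding $\div(S(\xi))=(\div S)(\xi)+\langle\nabla\xi,S\rangle$. Note that the paper does not prove this lemma at all — it states it as a well-known fact and cites Barros–Gomes \cite{bib7} — so there is no internal proof to compare against; your computation is exactly the routine verification that reference supplies, and your remark about needing $S$ symmetric (or fixing the slot convention for the dual vector field $S(\xi)$) is the only point of care, which is harmless here since $S$ is always $\Ric$, a Hessian, or a Lie derivative of $g$ in the paper's applications.
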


\section{Main Results}
Let $f:M^n\rightarrow\mathbb{R}$ be a real differential function on a Riemannian manifold $M^n$. Then the trace-free part of the Hessian of $f$, is given by 
\begin{equation}\label{ekt1}
     \vert\dot{\nabla}^2f\vert^2=\vert\nabla^2f\vert^2-\frac{(\Delta f)^2}{n}. 
 \end{equation}
 Proceeding, we have the following lemma:
\begin{lemma}\label{vc}
    Let \( (M^n,g,\xi,\lambda, \rho) \) be an \(h\)-almost Ricci-Bourguignon soliton. Then the following conditons hold: 
    \[\operatorname{\div}\left((h\mathcal{L}_{\xi}g)\zeta\right)=2\operatorname{\div}(\lambda \zeta)+2\rho\operatorname{\div}(R \zeta)-(1-2\rho)g(\nabla R,\zeta)-2\langle \nabla \zeta,\operatorname{Ric}\rangle.\]
If \(M^n\) is compact then $\displaystyle\int_{M}g(\nabla R,\zeta)\mathrm{d}M=(2\rho-1)\int_{M}\langle \nabla \zeta,\operatorname{Ric}\rangle\mathrm{d}M$,\,  for any  vector field \(\zeta\) on $M$.
    \end{lemma}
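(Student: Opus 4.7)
The plan is to derive the pointwise identity directly from the soliton equation \eqref{eq3h} by applying Lemma \ref{lemi1} with $S=\operatorname{Ric}$, and then integrate over $M$ using the divergence theorem.

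First I would solve the defining equation \eqref{eq3h} for the $h$-weighted Lie derivative, obtaining
\begin{equation*}
h\,\mathcal{L}_{\xi}g \;=\; 2(\lambda+\rho R)g \;-\; 2\operatorname{Ric}.
\end{equation*}
Contracting with $\zeta$ (so that $(h\mathcal{L}_{\xi}g)\zeta$ is the $1$-form $X\mapsto (h\mathcal{L}_{\xi}g)(\zeta,X)$, equivalently the corresponding vector field via $g$) and taking divergences gives
\begin{equation*}
\operatorname{\div}\!\bigl((h\mathcal{L}_{\xi}g)\zeta\bigr)
\;=\; 2\operatorname{\div}\!\bigl((\lambda+\rho R)\zeta\bigr) \;-\; 2\operatorname{\div}\!\bigl(\operatorname{Ric}(\zeta)\bigr),
\end{equation*}
and I would split the first term on the right as $2\operatorname{\div}(\lambda\zeta)+2\rho\operatorname{\div}(R\zeta)$ (using that $\rho$ is a real constant).

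The heart of the argument is evaluating $\operatorname{\div}(\operatorname{Ric}(\zeta))$. I would invoke Lemma \ref{lemi1} with $S=\operatorname{Ric}$ and $\phi\equiv 1$ to write
\begin{equation*}
\operatorname{\div}\!\bigl(\operatorname{Ric}(\zeta)\bigr)
\;=\; (\operatorname{\div}\operatorname{Ric})(\zeta) \;+\; \langle\nabla\zeta,\operatorname{Ric}\rangle,
\end{equation*}
and then apply the contracted second Bianchi identity $\operatorname{\div}\operatorname{Ric}=\tfrac12\,dR$, so that $(\operatorname{\div}\operatorname{Ric})(\zeta)=\tfrac12\,g(\nabla R,\zeta)$. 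Substituting back and reassembling the $g(\nabla R,\zeta)$ contributions—specifically, absorbing a piece of $2\rho\operatorname{\div}(R\zeta)=2\rho R\operatorname{\div}\zeta+2\rho\,g(\nabla R,\zeta)$ against the $-g(\nabla R,\zeta)$ coming from the Bianchi step—produces the combined coefficient $-(1-2\rho)$, yielding the stated pointwise identity.

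For the compact case, I would integrate the identity over $M$. By the divergence theorem the three divergence terms $\int_M \operatorname{\div}((h\mathcal{L}_{\xi}g)\zeta)$, $\int_M \operatorname{\div}(\lambda\zeta)$ and $\int_M\operatorname{\div}(R\zeta)$ all vanish, leaving
\begin{equation*}
0 \;=\; -(1-2\rho)\int_{M} g(\nabla R,\zeta)\,\mathrm{d}M \;-\; 2\int_{M}\langle\nabla\zeta,\operatorname{Ric}\rangle\,\mathrm{d}M,
\end{equation*}
from which the claimed integral identity follows by rearrangement. The only real delicacy is keeping the bookkeeping of the two places where $g(\nabla R,\zeta)$ appears straight (once from the Bianchi identity, once hidden inside $\operatorname{\div}(R\zeta)$); everything else is routine application of Lemma \ref{lemi1} and Stokes' theorem.
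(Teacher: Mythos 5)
Your route is essentially the paper's: rewrite the soliton equation as $h\mathcal{L}_{\xi}g=2(\lambda+\rho R)g-2\Ric$, apply Lemma \ref{lemi1} with $\phi\equiv 1$, use the contracted Bianchi identity $2\div\Ric=\nabla R$, and then integrate via the divergence theorem in the compact case. Up to the substitution step your computation is correct and yields
\begin{equation*}
\div\bigl((h\mathcal{L}_{\xi}g)\zeta\bigr)=2\div(\lambda\zeta)+2\rho\div(R\zeta)-g(\nabla R,\zeta)-2\langle\nabla\zeta,\Ric\rangle ,
\end{equation*}
where the splitting $2\div((\lambda+\rho R)\zeta)=2\div(\lambda\zeta)+2\rho\div(R\zeta)$ indeed needs $\rho$ constant, as you note.

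The gap is in the final ``reassembling'' step and in the integration. You cannot keep the term $2\rho\div(R\zeta)$ and simultaneously turn the coefficient of $g(\nabla R,\zeta)$ into $-(1-2\rho)$: expanding $2\rho\div(R\zeta)=2\rho R\div\zeta+2\rho\,g(\nabla R,\zeta)$ gives the coefficient $-(1-2\rho)$ only if $2\rho\div(R\zeta)$ is simultaneously replaced by $2\rho R\div\zeta$. As literally written, the displayed identity of Lemma \ref{vc} double-counts $2\rho\,g(\nabla R,\zeta)$, and your claim that the bookkeeping ``produces the combined coefficient $-(1-2\rho)$, yielding the stated pointwise identity'' silently reproduces that double count rather than resolving it. The same issue propagates to the compact case: integrating the identity you actually derived gives $\int_M g(\nabla R,\zeta)\,\mathrm{d}M=-2\int_M\langle\nabla\zeta,\Ric\rangle\,\mathrm{d}M$ (which is what the paper itself uses later as \eqref{f12}), while integrating the identity as stated in the lemma would give $(1-2\rho)\int_M g(\nabla R,\zeta)\,\mathrm{d}M=-2\int_M\langle\nabla\zeta,\Ric\rangle\,\mathrm{d}M$, i.e.\ a factor $2/(2\rho-1)$; neither rearranges into the asserted $\int_M g(\nabla R,\zeta)\,\mathrm{d}M=(2\rho-1)\int_M\langle\nabla\zeta,\Ric\rangle\,\mathrm{d}M$. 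The inconsistency originates in the lemma's statement itself (the paper's own proof is loose at exactly the same spot, writing the $\div\zeta$-terms as ``$\div\lambda$'' and ``$\rho\div R$''), but your proposal should have flagged the discrepancy and stated the corrected identities instead of asserting that the claimed formulas follow ``by rearrangement'' --- that last step, as written, does not follow from your computation.
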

    \begin{proof}
 First, taking  $S =\mathcal{L}_{\xi}g$ and $\phi = 1$ in Lemma \eqref{lemi1}. Then using equation  \eqref{eq3h} and the second contracted Bianchi identity $2\div\Ric=\nabla R$, we obtain
\begin{align*}
\displaystyle\operatorname{div}\left((h\mathcal{L}_{\xi}g)\zeta\right)&=\operatorname{div}(h\mathcal{L}_{\xi}g)(\zeta)+\langle \nabla \zeta,h\mathcal{L}_{\xi}g\rangle\\
&=\displaystyle\bigg\{\operatorname{div}\bigg(2\lambda I+\rho R I-2\operatorname{Ric}\bigg)\bigg\}(\zeta)+\bigg\langle \nabla \zeta, -2\operatorname{Ric}+2(\lambda+\rho R)g\bigg\rangle \\
&=\displaystyle-g(\nabla R,\zeta)+2g(\nabla \lambda,\zeta)+2\rho g(\nabla R,\zeta)+2\div\lambda+ 2\rho\div R-2\langle\nabla \zeta,\operatorname{Ric}\rangle\\
&=(2\rho-1)g(\nabla R,\zeta)+2g(\nabla\lambda,\zeta)+2\div\lambda+ 2\div\rho R-2\langle \nabla \zeta,\operatorname{Ric}\rangle,
\end{align*}
   for any vector field $\zeta\in\mathfrak{X}(M)$.  \\
          
     On the other hand if  \( M^n \) is compact, then applying Green's theorem we arrive at 
     \[\displaystyle\int_{M}g(\nabla R,\zeta)\mathrm{d}M=(2\rho-1)\int_{M}\langle \nabla \zeta,\operatorname{Ric}\rangle\mathrm{d}M.\]
     Hence the proof is completed. 
    \end{proof}
When we have a compact $h$-almost Ricci-Bourguignon soliton  manifold, we can use the Hodge–de Rham decomposition theorem to write
\begin{equation} \label{ab1}
 \xi = \nabla h + Y ,
\end{equation}
where $Y\in\mathfrak{X}(M)$ whereas \( h: M^n \to \mathbb{R} \) is a smooth function on $M^n$ and \( \operatorname{div}(Y) = 0 \). Therefore the equation \eqref{eq3h} turns into 
\begin{equation}\label{equ0}
    \operatorname{Ric}+h\nabla^2h+\frac{h}{2}\mathcal{L}_Yg=(\lambda+\rho R)g.
\end{equation}
Now taking $\displaystyle N_{k}=\frac{h}{2}\mathcal{L}_{\xi}g+h\nabla^2h$ in \eqref{equ0}, then it can be rewritten as follows
\begin{equation}\label{equ00}
    \displaystyle\operatorname{Ric}+N_{k}=(\lambda+\rho R)I.
\end{equation}
 Now we recall that the traceless part of a tensor T is defined by

 \begin{equation}
     \dot T=T-\frac{tr(T)}{n}g. 
 \end{equation}
We then calculate
\begin{align*}
\operatorname{div}(N_{k}(\nabla h))&=(\operatorname{div}N_{k})(\nabla h)+\langle \nabla^2h,N_{k}\rangle\\
&=(\operatorname{div}\lambda I+\rho R I-\operatorname{Ric}))(\nabla h)+\langle \nabla^2h, h\frac{1}{2}\mathcal{L}_{\xi}g+h\nabla^2h\rangle \\
&=(\rho-\frac{1}{2})g(\nabla R,\nabla h)+g(\nabla\lambda,\nabla h)+h\vert\nabla^2h\vert^2+h\langle  \nabla^2h, \frac{1}{2}\mathcal{L}_{\xi}g\rangle.
\end{align*}

Using the fact that $\displaystyle\vert\dot\nabla^2h\vert^2+\frac{(\Delta h)^2}{n}=\vert\nabla^2h\vert^2$, we obtain
\begin{align*}
\operatorname{div}(N_{k}(\nabla h))&=(\rho-\frac{1}{2})g(\nabla R,\nabla h)+g(\nabla\lambda,\nabla h)+h\vert\dot\nabla^2h\vert^2\\
&+h\displaystyle\frac{(\Delta h)^2}{n}+h\langle  \nabla^2h, \frac{1}{2}\mathcal{L}_{\xi}g\rangle\\
&=(\rho-\frac{1}{2})g(\nabla R,\nabla h)+g(\nabla\lambda,\nabla h)+h\vert\dot\nabla^2h\vert^2\\
&+h\displaystyle\frac{(\Delta h)^2}{n}+h\langle  \nabla^2h, \frac{1}{2}\mathcal{L}_{\xi}g\rangle\\
&=(\rho-\frac{1}{2})g(\nabla R,\nabla h)+g(\nabla\lambda,\nabla h)+h\vert\dot\nabla^2h\vert^2\\
&+\displaystyle\frac{(\Delta h)}{n}(n\lambda+n\rho R-R)+h\langle  \nabla^2h, \frac{1}{2}\mathcal{L}_{\xi}g\rangle.
            \end{align*}
Thus, we get
\begin{equation}\label{f1}
\begin{array}{ll}
\operatorname{div}(N_{k}(\nabla h))&=(\rho-\frac{1}{2})g(\nabla R,\nabla h)+g(\nabla\lambda,\nabla h)+h\vert\dot\nabla^2h\vert^2\\
&+\displaystyle\lambda\Delta h+(\rho-\frac{1}{n}) R\Delta h +h\langle  \nabla^2h, \frac{1}{2}\mathcal{L}_{\xi}g\rangle.
\end{array}
 \end{equation}
From which we derive the following result.

\begin{theorem}\label{ll}
  Let   $(M^n,g,\zeta,\lambda, \rho)$ be a compact $h$-almost Ricci-Bourguignon soliton. Then the  following integral formula holds: 
  \begin{align*}
\displaystyle\int_Mh\vert\dot\nabla^2h\vert^2 dM&=\displaystyle\int_M\left(\frac{n\rho-1}{n}\right)g(\nabla R,\nabla h)dM+\int_M g(\nabla h,\nabla\lambda)dM+\int_M\vert\nabla^2h\vert^2 dM.
 \end{align*}
\end{theorem}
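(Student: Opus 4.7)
The plan is to integrate the pointwise identity (\ref{f1}) over the compact manifold $M$ and then unwind the resulting integrals using Stokes' theorem, integration by parts, and the defining soliton equation (\ref{eq3h}).

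First, since $M$ is compact without boundary, the divergence theorem gives $\int_M \operatorname{div}(N_k(\nabla h))\,dM = 0$. Integrating (\ref{f1}) therefore produces an identity in which $\int_M h\vert\dot\nabla^2 h\vert^2\,dM$ can be isolated as a combination of integrals of $g(\nabla R,\nabla h)$, $g(\nabla\lambda,\nabla h)$, $\lambda\Delta h$, $R\Delta h$, and $h\langle\nabla^2 h,\tfrac{1}{2}\mathcal{L}_\xi g\rangle$.

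Second, I would reduce the two Laplacian terms via the standard identity $\int_M \varphi\,\Delta h\,dM = -\int_M g(\nabla\varphi,\nabla h)\,dM$, valid for any smooth $\varphi$ on a closed manifold. Applying it with $\varphi=\lambda$ and $\varphi=(\rho-\tfrac{1}{n})R$ folds the Laplacian contributions into the pre-existing gradient pairings; after combining the coefficients $(\rho-\tfrac{1}{2})$ and $-(\rho-\tfrac{1}{n})$ the factor $(n\rho-1)/n$ should surface as the multiplier of $\int_M g(\nabla R,\nabla h)\,dM$, while the two copies of $g(\nabla\lambda,\nabla h)$ telescope.

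Third, the residual cross term $\int_M h\langle\nabla^2 h,\tfrac{1}{2}\mathcal{L}_\xi g\rangle\,dM$ is rewritten by inserting the soliton equation in the form $\tfrac{h}{2}\mathcal{L}_\xi g = (\lambda+\rho R)g - \operatorname{Ric}$. This splits it into a Laplacian contribution $\int_M (\lambda+\rho R)\Delta h\,dM$, again removable by integration by parts, and a Hessian--Ricci pairing $\int_M \langle\nabla^2 h,\operatorname{Ric}\rangle\,dM$; if further simplification is required, Lemma \ref{vc} applied to $\zeta=\nabla h$ converts this last integral into a scalar-curvature-gradient one, tying everything back to the target form.

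The principal obstacle will be the careful coefficient bookkeeping as the successive integrations by parts and soliton substitutions interact: one must verify that the $\lambda$-gradient contributions cancel in pairs and that the residual Hessian pieces recombine, via the trace-free decomposition $\vert\nabla^2 h\vert^2 = \vert\dot\nabla^2 h\vert^2 + (\Delta h)^2/n$, into the stated $\int_M \vert\nabla^2 h\vert^2\,dM$ summand on the right. A subsidiary check is that the coefficient $(n\rho-1)/n$ emerges cleanly, with no spurious $\rho$-dependence surviving from the $\operatorname{Ric}$-pairing step.
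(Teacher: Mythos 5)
You follow the paper's own route (integrate \eqref{f1} over the compact manifold, discard the divergence, integrate the Laplacian terms by parts, then deal with the cross term), but there is a genuine gap at the cross-term step, and it is exactly where the third integral of the statement is supposed to come from. In \eqref{f1} the function $h$ inside the Hessian is the Hodge--de Rham potential of $\xi=\nabla h+Y$, $\operatorname{div}Y=0$ (see \eqref{ab1}--\eqref{equ0}), and the cross term is really $\frac{h}{2}\langle\nabla^2h,\mathcal{L}_Yg\rangle$: the tensor $N_k=(\lambda+\rho R)g-\operatorname{Ric}$ equals $h\nabla^2h+\frac{h}{2}\mathcal{L}_Yg$, and the summand $h\vert\nabla^2h\vert^2$ has already been split off and partly absorbed into the trace terms of \eqref{f1}; the ``$\mathcal{L}_{\xi}$'' printed there is a slip for $\mathcal{L}_Y$ (taken literally, \eqref{f1} double counts $h\vert\nabla^2h\vert^2$ and is false). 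Consequently the correct substitution is \eqref{equ0} in the form $\frac{h}{2}\mathcal{L}_Yg=(\lambda+\rho R)g-\operatorname{Ric}-h\nabla^2h$, and it is precisely the extra term $-h\vert\nabla^2h\vert^2$ produced by it that generates the Hessian-norm integral on the right-hand side (this is the paper's \eqref{u1}, where in addition a factor $h$ is dropped). Your substitution $\frac{h}{2}\mathcal{L}_{\xi}g=(\lambda+\rho R)g-\operatorname{Ric}$ yields only $(\lambda+\rho R)\Delta h-\langle\nabla^2h,\operatorname{Ric}\rangle$, so no Hessian-squared term can ever appear and your computation would terminate at
\begin{equation*}
\int_M h\vert\dot\nabla^2h\vert^2\,\mathrm{d}M=\frac{n\rho-1}{n}\int_Mg(\nabla R,\nabla h)\,\mathrm{d}M+\int_Mg(\nabla\lambda,\nabla h)\,\mathrm{d}M,
\end{equation*}
which is not the stated identity (and rests on the misread version of \eqref{f1}).

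Two smaller points. First, a bookkeeping slip in your second step: $(\rho-\tfrac12)-(\rho-\tfrac1n)=\tfrac{2-n}{2n}$, so integrating by parts only produces the coefficient $\tfrac{n-2}{2n}$ (as in the paper's \eqref{f1s}); the factor $\tfrac{n\rho-1}{n}$ emerges only after the Ricci pairing is converted via $\int_Mg(\nabla R,\nabla h)\,\mathrm{d}M=-2\int_M\langle\nabla^2h,\operatorname{Ric}\rangle\,\mathrm{d}M$ (the paper's \eqref{f12}), which contributes a further $\rho-\tfrac12$; your alternative of using Lemma \ref{vc} with $\zeta=\nabla h$ for this conversion is fine and equivalent. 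Second, if the argument is carried out with the correct $\mathcal{L}_Y$ substitution, the last term comes out as $\int_M h\vert\nabla^2h\vert^2\,\mathrm{d}M$, with the weight $h$, so the theorem as printed appears to have lost a factor of $h$; in any case your plan, as written, reaches neither version of the formula.
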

\begin{proof}
On integrating equation \eqref{f1}, we get

\begin{equation}\label{f1s}
\begin{array}{ll}
\displaystyle\int_Mh\vert\dot\nabla^2h\vert^2=\displaystyle\int_M\left(\frac{n-2}{2n}\right)g(\nabla R,\nabla h)-\int_M\langle  \nabla^2 h, \frac{h}{2}\mathcal{L}_{\xi}g\rangle.
\end{array}
 \end{equation}
Thus, from \eqref{equ0}, we derive 

\begin{equation}\label{u1}
\langle  \nabla^2h, h\frac{1}{2}\mathcal{L}_{\xi}g\rangle=\lambda\Delta h+\rho R\Delta h-\vert\nabla^2h\vert^2-\langle  \nabla^2h, \Ric\rangle.
\end{equation}
Moreover the second contracted Bianchi identity gives
\begin{equation}\label{f12}
\int_Mg(\nabla h, \nabla R) dM=-2\int_M\langle  \nabla^2h, \Ric\rangle dM.
\end{equation}
Using \eqref{f1s}, \eqref{u1} and \eqref{f12}, we get the desired result.   
\end{proof}
As a direct consequence of this theorem we deduce the following corollary.

\begin{corollary}
 Let $(M^n,g,\xi,\lambda, \rho)$ be a compact $h$-Ricci-Bourguignon soliton with potential vector field $\zeta=\nabla h+Y$. Then  $\xi$ is a Killing vector field. 
\end{corollary}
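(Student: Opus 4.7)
The plan is to combine the integral formula of Theorem~\ref{ll} with the divergence identity of Lemma~\ref{vc}, applied to the gradient factor of the Hodge de Rham decomposition $\xi = \nabla h + Y$, in order to force the potential function $h$ to be constant; once $\nabla h \equiv 0$, the decomposition collapses to $\xi = Y$ with $\operatorname{div}\xi = 0$, and a short additional computation from the soliton equation upgrades divergence-free to Killing.

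First I would substitute $\zeta = \nabla h$ into the compact form of Lemma~\ref{vc}, obtaining
\[
\int_M g(\nabla R, \nabla h)\, dM = (2\rho-1)\int_M \langle \nabla^2 h, \operatorname{Ric}\rangle\, dM,
\]
and then pair it with the consequence \eqref{f12} of the second contracted Bianchi identity, $\int_M g(\nabla h, \nabla R)\, dM = -2\int_M \langle \nabla^2 h, \operatorname{Ric}\rangle\, dM$. Eliminating the Hessian-Ricci coupling yields an identity that, for generic $\rho$, forces both $\int_M g(\nabla R,\nabla h)\, dM = 0$ and $\int_M \langle \nabla^2 h,\operatorname{Ric}\rangle\, dM = 0$. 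Feeding these vanishings back into the integral formula of Theorem~\ref{ll}, converting $\int_M g(\nabla h,\nabla \lambda)\, dM$ to $-\int_M \lambda\,\Delta h\, dM$ by one more integration by parts, and exploiting the pointwise split $|\nabla^2 h|^2 = |\dot{\nabla}^2 h|^2 + (\Delta h)^2/n$ from \eqref{ekt1}, the identity should collapse to an integrated non-negative quadratic form in $\nabla^2 h$; its vanishing will imply $\nabla^2 h \equiv 0$, and hence $h$ is harmonic. Because $M$ is compact, $h$ must then be constant, so $\nabla h \equiv 0$.

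To close the argument, with $\nabla h \equiv 0$ the Hodge decomposition reduces to $\xi = Y$ with $\operatorname{div}\xi = 0$, and equation~\eqref{eq3h} becomes $\operatorname{Ric} + \tfrac{h}{2}\mathcal{L}_\xi g = (\lambda + \rho R)\,g$. Tracing this relation against $g$ produces a scalar identity relating $R$, $\lambda$, and $\operatorname{div}\xi$, which together with $\operatorname{div}\xi = 0$ yields $\mathcal{L}_\xi g = 0$, so $\xi$ is Killing. The main obstacle I expect is the step that forces $\nabla^2 h \equiv 0$: the integrand $h\,|\dot{\nabla}^2 h|^2$ in Theorem~\ref{ll} is not of definite sign, since $h$ is an arbitrary smooth function on $M$; to recover genuine non-negativity one has to carefully track the coefficient $(n\rho-1)/n$ against the $\lambda$ and $R$ contributions, and a mild sign control on $h$ (for instance $h$ of constant sign, or at least never vanishing on the support of $\dot{\nabla}^2 h$) will likely be needed implicitly to close the argument.
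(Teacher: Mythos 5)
There are genuine gaps, and they sit exactly at the two places where you hedge. First, the constancy of $h$: after your cancellations the formula of Theorem \ref{ll} reduces to $\int_M h|\dot{\nabla}^2h|^2\,dM=\int_M|\nabla^2h|^2\,dM$, which rearranges (using \eqref{ekt1}) to $\int_M (h-1)|\dot{\nabla}^2h|^2\,dM=\tfrac1n\int_M(\Delta h)^2\,dM$; no hypothesis of the corollary supplies the sign control needed to force $\nabla^2h\equiv0$ — even $h>0$ is not enough, one would need something like $h\le 1$ — so the heart of the proof is missing, as you yourself concede. Second, your elimination between Lemma \ref{vc} (with $\zeta=\nabla h$) and \eqref{f12} only works for $\rho\neq-\tfrac12$, whereas the corollary carries no restriction on $\rho$; worse, the $(2\rho-1)$ coefficient you lean on is in direct tension with the Bianchi-based identity \eqref{f12}: applied to an arbitrary $\zeta$ the two together would force $\int_M\langle\nabla\zeta,\operatorname{Ric}\rangle\,dM=0$ for every vector field, hence $R$ constant on any such soliton, so that coefficient is a computational slip in the lemma rather than leverage one can safely exploit. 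The paper's own proof goes differently at this point: it keeps $\xi$ itself in the integral formula and kills $\int_M g(\nabla R,\xi)\,dM$ via the Kazdan--Warner identity (invoking conformality of $\xi$), and only then asserts that $h$ is constant.

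Your closing step is also incorrect as described. Tracing \eqref{eq3h} against $g$ with $\operatorname{div}\xi=0$ yields only the scalar relation $R=n(\lambda+\rho R)$; it gives no information on the trace-free part of $\mathcal{L}_\xi g$, and a divergence-free field is of course not Killing in general. Even granting $\nabla h\equiv0$, the soliton equation gives $\tfrac h2\mathcal{L}_\xi g=(\lambda+\rho R)g-\operatorname{Ric}$, which vanishes only if $(M,g)$ is Einstein with the appropriate constant — something your argument never establishes. So neither "$h$ is constant" nor "divergence-free $\Rightarrow$ Killing" is secured, and the proposal does not prove the corollary; the paper's proof, for its part, asserts these same two conclusions but routes the curvature term through Kazdan--Warner rather than through your $\rho$-elimination.
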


\begin{proof}
 Notice that $\lambda$ is constant and and $\xi=\nabla h+Y$ with $\div Y=0$. Then from Lemma \ref{ll}, we get
  \begin{align}\label{tr1}
\displaystyle\int_Mh\vert\dot\nabla^2h\vert^2 dM&=\displaystyle\int_M(\frac{n\rho-1}{n})g(\nabla R,\xi)dM+\int_M\vert\nabla^2h\vert^2 dM.
 \end{align}
  But, using that $\xi$ is conformal we are able to use Kazdan–Warner's identity
 \cite{kazj} to deduce that $\displaystyle\int_Mg(\nabla R,\xi)dM=0$. Then we have 
 \[\displaystyle\int_Mh\vert\dot\nabla^2h\vert^2 dM=\int_M\vert\nabla^2h\vert^2 dM.\] 
 Thus $h$ is constant, which allows us to conclude $\xi=Y$ and $\mathcal{L}_{\xi}g=0$. Hence the proof is completed.
\end{proof}

Following the result from \cite{bib1} in a compact Ricci soliton, we state the following integral formulas for \(h\)-almost Ricci-Bourguignon soliton  in a compact Riemannian manifold.

\begin{proposition}\label{p1}
    Let \(\left(M^n,g,\xi,h,\lambda\right)\) be a compact \(h\)-almost Ricci-Bourguignon soliton. Then the following identities are satisfied:
    \begin{enumerate}
        \item \(\displaystyle\int_{M}h\left|\nabla^2h-\frac{\Delta h}{n}g\right|^2\mathrm{d}M^n=\frac{n-2}{2n}\displaystyle\int_{M}g(\nabla R,\nabla h)\mathrm{d}M-\frac{1}{2}\displaystyle\int_{M}h\langle \nabla^2 h,\mathcal{L}_Yg\rangle\mathrm{d}M\)
        \item \(\displaystyle\int_{M}h\left|\nabla^2 h-\frac{\Delta h}{n}g\right|^2\mathrm{d}M^n=\frac{n-2}{2n}\displaystyle\int_{M}g(\nabla R,\nabla f)\mathrm{d}M^2+\frac{1}{4}\displaystyle\int\limits_{M}h|\mathcal{L}_Yg|^2\mathrm{d}M^n\)
        \item \(\displaystyle\int_{M^n}\frac{1}{h}\left|\operatorname{Ric}-\frac{R}{n}g\right|^2\mathrm{d}M=\frac{n-2}{2n}\displaystyle\int_{M}g(\nabla R,\nabla h)\mathrm{d}M=\frac{n-2}{2n}\displaystyle\int_{M}g(\nabla R,\xi)\mathrm{d}M^n.\)
    \end{enumerate}
    \end{proposition}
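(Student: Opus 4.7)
The plan is to derive all three identities directly from the rewritten soliton equation \eqref{equ0}, namely $\operatorname{Ric}+h\nabla^{2}h+\frac{h}{2}\mathcal{L}_{Y}g=(\lambda+\rho R)g$, which follows from the Hodge--de Rham splitting $\xi=\nabla h+Y$ with $\operatorname{div}Y=0$. Two tools will be used throughout: the contracted Bianchi identity $2\operatorname{div}\operatorname{Ric}=\nabla R$, and the observation that $\operatorname{tr}(\mathcal{L}_{Y}g)=2\operatorname{div}Y=0$. The latter immediately gives $\int_{M}g(\nabla R,Y)\,dM=-\int_{M}R\operatorname{div}Y\,dM=0$ by the divergence theorem on a compact $M$, a fact I will invoke repeatedly.

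For part (1), I take the pointwise inner product of \eqref{equ0} with $\nabla^{2}h$ and integrate. Lemma \ref{lemi1} applied with $S=\operatorname{Ric}$, $\phi=1$, $\zeta=\nabla h$ gives $\int_{M}\langle\operatorname{Ric},\nabla^{2}h\rangle\,dM=-\frac{1}{2}\int_{M}g(\nabla R,\nabla h)\,dM$, while the right-hand side integrates by parts to $-\int_{M}g(\nabla\lambda,\nabla h)\,dM-\rho\int_{M}g(\nabla R,\nabla h)\,dM$. The trace of \eqref{equ0} also yields $h\Delta h=n\lambda+(n\rho-1)R$, and a second integration by parts then expresses $\frac{1}{n}\int_{M}h(\Delta h)^{2}\,dM$ entirely in terms of $\int_{M}g(\nabla R,\nabla h)\,dM$ and $\int_{M}g(\nabla\lambda,\nabla h)\,dM$. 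Feeding these into $|\dot\nabla^{2}h|^{2}=|\nabla^{2}h|^{2}-\frac{(\Delta h)^{2}}{n}$ makes the $\lambda$-contributions cancel and reduces the coefficient of $\int_{M}g(\nabla R,\nabla h)\,dM$ to $\frac{n-2}{2n}$, which is the content of (1).

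For part (2), I pair \eqref{equ0} with $\mathcal{L}_{Y}g$ instead of $\nabla^{2}h$. Since $\operatorname{tr}(\mathcal{L}_{Y}g)=0$ the right-hand side vanishes pointwise, so integration gives $\int_{M}\langle\operatorname{Ric},\mathcal{L}_{Y}g\rangle\,dM+\int_{M}h\langle\nabla^{2}h,\mathcal{L}_{Y}g\rangle\,dM+\frac{1}{2}\int_{M}h|\mathcal{L}_{Y}g|^{2}\,dM=0$. Bianchi together with the identity $\int_{M}g(\nabla R,Y)\,dM=0$ from the previous paragraph forces $\int_{M}\langle\operatorname{Ric},\mathcal{L}_{Y}g\rangle\,dM=0$, and hence $\int_{M}h\langle\nabla^{2}h,\mathcal{L}_{Y}g\rangle\,dM=-\frac{1}{2}\int_{M}h|\mathcal{L}_{Y}g|^{2}\,dM$. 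Substituting this into the identity of (1) (and reading the $\nabla f$ in the statement as $\nabla h$) yields (2).

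Part (3) follows by passing to trace-free parts of \eqref{equ00}: with $N_{k}=h\nabla^{2}h+\frac{h}{2}\mathcal{L}_{Y}g$, one has $\dot{\operatorname{Ric}}=-\dot N_{k}$ pointwise, so $|\dot{\operatorname{Ric}}|^{2}=|\dot N_{k}|^{2}$. Because $\operatorname{tr}(\mathcal{L}_{Y}g)=0$, $\dot N_{k}=h\dot\nabla^{2}h+\frac{h}{2}\mathcal{L}_{Y}g$; expanding $\frac{1}{h}|\dot N_{k}|^{2}$ into three terms, integrating, and inserting the formulas of (1) and (2) cancels the $|\mathcal{L}_{Y}g|^{2}$-contributions and leaves exactly $\frac{n-2}{2n}\int_{M}g(\nabla R,\nabla h)\,dM$. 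The final equality with $\int_{M}g(\nabla R,\xi)\,dM$ is then immediate from $\xi=\nabla h+Y$ and the already-established $\int_{M}g(\nabla R,Y)\,dM=0$. I expect the main bookkeeping challenge to be in part (1): arranging Bianchi, the $h\Delta h$ trace relation, and the two integrations by parts so that all $\lambda$-terms really cancel and the coefficients assemble into precisely $\frac{n-2}{2n}$.
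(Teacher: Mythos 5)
Your proposal is correct and follows essentially the same route as the paper: it rests on the same Hodge--de Rham splitting $\xi=\nabla h+Y$, pairs the rewritten soliton equation $\operatorname{Ric}+h\nabla^2h+\frac{h}{2}\mathcal{L}_Yg=(\lambda+\rho R)g$ with $\nabla^2h$ and with $\mathcal{L}_Yg$, and uses Lemma \ref{lemi1}, the contracted Bianchi identity, the trace relation $h\Delta h=n\lambda+(n\rho-1)R$, and $\int_M g(\nabla R,Y)\,\mathrm{d}M=0$, exactly as the paper does. The only differences are organizational (you integrate first and let the $\lambda$ and $\rho R$ terms cancel under the integral, and phrase item 3 via $\dot{\operatorname{Ric}}=-\dot N_k$, while the paper performs these cancellations pointwise before integrating), so the argument matches the paper's proof.
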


    \begin{proof}
   From Lemma \ref{lemi1}, we infer
     \begin{align*}
            \operatorname{div}(\operatorname{Ric}(\nabla f))&=\operatorname{div}\operatorname{Ric}(\nabla h)+\langle \nabla^2h,\operatorname{Ric}\rangle.
        \end{align*}
  Then, using the second contracted Bianchi identity $2\div\Ric=\nabla R$ and the equation  $\displaystyle|\nabla^2h|^2=\left|\nabla^2h-\frac{\Delta f}{n}g\right|^2+\frac{(\Delta h)^2}{n}$, we get 
  \begin{align}\label{eg1}
  \begin{array}{ll}
            \displaystyle\operatorname{div}(\operatorname{Ric}(\nabla h))&=
            \displaystyle\frac{1}{2}g(\nabla R,\nabla h)+\langle \nabla^2 h,-h\nabla^2h-\frac{h}{2}\mathcal{L}_Yg+\lambda g+\rho R g \rangle\\
            &=\displaystyle\frac{1}{2}g(\nabla R,\nabla h)-h|\nabla^2h|^2+\lambda \Delta h+\rho R \Delta h-\frac{h}{2}\langle \nabla^2f,\mathcal{L}_Yg\rangle\\
             &=\displaystyle\frac{1}{2}g(\nabla R,\nabla h)+\lambda \Delta f+\rho R \Delta h-h\left|\nabla^2h-\frac{\Delta h}{n}g\right|^2-h\frac{(\Delta f)^2}{n}-\frac{h}{2}\langle \nabla^2h,\mathcal{L}_Yg\rangle\\
             &=\displaystyle\frac{1}{2}g(\nabla R,\nabla h)+\frac{R}{n}\Delta h-h\left|\nabla^2h-\frac{\Delta h}{n}g\right|^2-\frac{h}{2}\langle \nabla^2h,\mathcal{L}_Yg\rangle.
              \end{array}
        \end{align}

 By definition, we have \[\operatorname{div}(R\nabla h)=R\operatorname{div}(\nabla h)+g(\nabla R,\nabla h)=R\Delta h+ g(\nabla R,\nabla h).\]
        Since $M$ is compact, thus applying Green theorem to the previous equality, 
       we deduce that \[\int_{M}R\Delta h \mathrm{d}M=-\int_{M}g(\nabla R, \nabla h)\mathrm{d}M.\]
        On integrating the equation \eqref{eg1}, we get
        \begin{equation}\label{equ3}
            \int_{M}h\left|\nabla^2h-\frac{\Delta h}{n}g\right|^2\mathrm{d}M=\frac{n-2}{2n}\int_{M}g(\nabla R,\nabla h)\mathrm{d}M-\frac{1}{2}\int_{M}h\langle \nabla^2h,\mathcal{L}_Yg\rangle\mathrm{d}M,
        \end{equation}
        which allow us to get the first item. \\
        
        For proving the second item, using equation \eqref{eq4h}, we get 
        
        \[h\langle\nabla^2h,\mathcal{L}_Yg\rangle=\langle h\nabla^2h,\mathcal{L}_Yg\rangle=\left\langle-\operatorname{Ric}-\frac{h}{2}\mathcal{L}_Yg+(\lambda+\rho R)g,\mathcal{L}_Yg\right\rangle.\]
        Taking into account that 
        
        \[\frac{1}{2}\langle \operatorname{Ric},\mathcal{L}_Yg\rangle=\langle \operatorname{Ric},\nabla Y\rangle,\]
        we deduce 
\begin{equation}\label{er1}
\begin{array}{ll}
        \displaystyle\int_{M}h\langle\nabla^2h,\mathcal{L}_Yg\rangle \mathrm{d}M^n=&-\displaystyle\int_{M}\langle \operatorname{Ric},\mathcal{L}_Yg\rangle \mathrm{d}M^n-\frac{1}{2}\int\limits_{M}h|\mathcal{L}_Yg|^2\mathrm{d}M\\
        &+\displaystyle\int_{M}(\lambda+\rho R)\operatorname{div}(Y)\mathrm{d}M.
        \end{array}
    \end{equation}
    Using the fact that
        \[\int_{M}\langle \operatorname{Ric},\mathcal{L}_Yg\rangle \mathrm{d}M=\int_{M}\operatorname{div(Ric(Y))\mathrm{d}M=0},\]
   then equation  \eqref{er1} turns into
       \begin{equation}\label{er2}
  \int\limits_{M}h\langle \nabla^2h,\mathcal{L}_Yg\rangle\mathrm{d}M=-\frac{1}{2}\int_{M}h|\mathcal{L}_Yg|^2\mathrm{d}M.
        \end{equation}
   Now plugging  \eqref{er2} into \eqref{equ3}, we obtain
         \begin{equation}\label{equ4}
            \int\limits_{M}h\left|\nabla^2h-\frac{\Delta h}{n}g\right|^2\mathrm{d}M=\frac{n-2}{2n}\int_{M}g(\nabla R,\nabla h)\mathrm{d}M+\frac{1}{4}\int_{M}|\mathcal{L}_Yg|^2\mathrm{d}M.
        \end{equation}
        Hence the second item follows.\\
        
       \noindent For proving the last item, we consider 
        \begin{align*}
        \displaystyle\left|\operatorname{Ric}-\frac{R}{n}g\right|^2&=\left|-\frac{h}{2}\mathcal{L}_\xi g+(\lambda+\rho R)g-\frac{R}{n}g\right|^2=\left|\frac{h\Delta f}{n}g-\frac{h}{2}\mathcal{L}_\xi g\right|^2\\
        &=h^2\left(\frac{(\Delta f)^2}{n}+\frac{1}{4}|\mathcal{L}_\xi g|^2-\frac{\Delta f}{n}\langle 
       g,\mathcal{L}_\xi g\rangle\right)\\
       &=h^2\left(\frac{(\Delta f)^2}{n}+\frac{1}{4}\left(4|\nabla^2f|^2+|\mathcal{L}_Y g|^2+4\langle \nabla^2f,\mathcal{L}_Yg\rangle\right)-\frac{2(\Delta f)^2}{n}\right)\\
       &=h^2\left|\nabla^2f-\frac{\Delta f}{n}g\right|^2+h^2\left(\frac{1}{4}|\mathcal{L}_Yg|^2+\langle \nabla^2f,\mathcal{L}_Yg\rangle\right).
       \end{align*}
       Therefore multiplying the previous equation by $\displaystyle\frac{1}{h}$, we obtain
       \[\int_{M}\frac{1}{h}\left|\operatorname{Ric}-\frac{R}{n}g\right|^2\mathrm{d}M=\int_{M}h\left|\nabla^2f-\frac{\Delta f}{n}g\right|^2\mathrm{d}M-\frac{1}{4}\int_{M}h|\mathcal{L}_Yg|^2\mathrm{d}M.\]
       Hence equation  \eqref{equ4} allows us to obtain: 
       \begin{equation*}
           \int_{M}\frac{1}{h}\left|\operatorname{Ric}-\frac{R}{n}g\right|^2\mathrm{d}M=\frac{n-2}{2n}\int_{M}g(\nabla R,\nabla f)\mathrm{d}M.
       \end{equation*}
       Since \( \displaystyle\int_{M}g(\nabla R,Y)\mathrm{d}M= \displaystyle\int_{M}\langle \operatorname{Ric},\nabla Y\rangle\mathrm{d}M^n=0\), it follows that
       \begin{align*}
           \int_{M}\frac{1}{h}\left|\operatorname{Ric}-\frac{R}{n}g\right|^2\mathrm{d}M&=\frac{n-2}{2n}\int_{M}g(\nabla R,\nabla f)\mathrm{d}M\\
           &=\frac{n-2}{2n}\int_{M}g(\nabla R,\xi)\mathrm{d}M.
       \end{align*}
       Thus the proof is completed.
        \end{proof}
We give the following theorem, which is a direct application of the previous proposition.

\begin{theorem}\label{t1}
    Let \( (M^n,g,\xi,h,\lambda) \) be a compact \(h\)-almost Ricci-Bourguignon soliton of dimension  $n>3$. Then $M$ is isometric to an Euclidean sphere if and only if one of the following conditions are satisfied:
    \begin{enumerate}
        \item \(\displaystyle\int_{M}h\langle \nabla^2h,\operatorname{Ric}\rangle \mathrm{d}M=0;\)
        \item The vector field \(\nabla h\) is nontrivial conformal. 
    \end{enumerate}
\end{theorem}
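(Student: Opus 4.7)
My plan is to reduce both conditions to the vanishing of $\int_M g(\nabla R,\nabla h)\,dM$: once this is achieved, Proposition \ref{p1}(3) together with $n>3$ and positivity of $h$ forces $\operatorname{Ric}=\tfrac{R}{n}g$, so $M$ is Einstein and $R$ is constant. The soliton structure can then be massaged into an Obata-type equation to which Obata's rigidity theorem applies, yielding the isometry with a round sphere. The converse direction is routine: on the standard sphere both conditions hold because $R$ is constant (making the first integral trivially zero) and the sphere carries natural gradient conformal fields such as the gradients of coordinate-height functions.

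For condition (2), I invoke the Kazdan--Warner identity \cite{kazj}: for any conformal Killing vector field $X$ on a compact Riemannian manifold, $\int_M X(R)\,dM=0$. Taking $X=\nabla h$ gives the vanishing directly. For condition (1), I plan to start from the divergence identity
\[
\operatorname{div}\bigl(h\,\operatorname{Ric}(\nabla h)\bigr)=\operatorname{Ric}(\nabla h,\nabla h)+\tfrac{h}{2}\,g(\nabla R,\nabla h)+h\,\langle\nabla^{2}h,\operatorname{Ric}\rangle,
\]
obtained by combining Lemma \ref{lemi1} with the second contracted Bianchi identity. Integrating over the compact $M$, the left-hand side vanishes and the hypothesis kills the last term, leaving a relation between $\int_M \operatorname{Ric}(\nabla h,\nabla h)\,dM$ and $\int_M g(\nabla R,\nabla h)\,dM$. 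I then replace $\operatorname{Ric}(\nabla h,\nabla h)$ using the soliton equation \eqref{equ0}, combined with $\operatorname{div}Y=0$ and the integral identity $\int_M \langle\operatorname{Ric},\mathcal{L}_Y g\rangle\,dM=0$ used in the proof of Proposition \ref{p1}, to bring the residual expression to a nonzero multiple of $\int_M g(\nabla R,\nabla h)\,dM$, whence the desired vanishing.

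With the Einstein condition established, \eqref{equ0} reduces to $h\nabla^{2}h+\tfrac{h}{2}\mathcal{L}_{Y}g=[\lambda+(\rho-\tfrac{1}{n})R]g$. Taking the $g$-traceless part, and using $\operatorname{tr}(\mathcal{L}_{Y}g)=2\operatorname{div}Y=0$, gives $h\bigl(\nabla^{2}h-\tfrac{\Delta h}{n}g\bigr)+\tfrac{h}{2}\mathcal{L}_{Y}g=0$. Pairing this pointwise identity with Proposition \ref{p1}(2)---which under the integral vanishing collapses to $\int_M h|\nabla^{2}h-\tfrac{\Delta h}{n}g|^{2}\,dM=\tfrac{1}{4}\int_M h|\mathcal{L}_{Y}g|^{2}\,dM$---I expect to conclude both $\mathcal{L}_{Y}g=0$ and $\nabla^{2}h=\tfrac{\Delta h}{n}g$. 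Differentiating the last identity and invoking the Einstein relation then yields $\nabla\Delta h=-\tfrac{R}{n-1}\nabla h$, so after normalizing $h$ by an additive constant one obtains the Obata equation $\nabla^{2}h+\tfrac{R}{n(n-1)}h\,g=0$ with $h$ nonconstant. The nontriviality assumption forces $R>0$, and Obata's classical theorem delivers the isometry with a round sphere of radius $\sqrt{n(n-1)/R}$.

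The main obstacle is the integration-by-parts bookkeeping in the argument for condition (1): tracking the several Bianchi-type manipulations against the background of the $h$-weighted soliton structure so as to isolate a nonzero coefficient in front of $\int_M g(\nabla R,\nabla h)\,dM$ is delicate, since the extra factor of $h$ inside the hypothesis does not interact cleanly with the standard divergence formula $\int_M \langle\nabla^{2}h,\operatorname{Ric}\rangle\,dM=-\tfrac{1}{2}\int_M g(\nabla R,\nabla h)\,dM$. A secondary subtlety is the final sign bookkeeping needed to extract both $\mathcal{L}_{Y}g=0$ and the vanishing of the traceless Hessian of $h$ simultaneously from the single integral equation provided by Proposition \ref{p1}(2).
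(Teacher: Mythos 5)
The weak point is condition (1). Your divergence identity $\operatorname{div}\bigl(h\,\operatorname{Ric}(\nabla h)\bigr)=\operatorname{Ric}(\nabla h,\nabla h)+\tfrac{h}{2}g(\nabla R,\nabla h)+h\langle\nabla^2h,\operatorname{Ric}\rangle$ is correct, and integrating it against the hypothesis gives $\int_M\operatorname{Ric}(\nabla h,\nabla h)\,\mathrm{d}M=-\tfrac12\int_M h\,g(\nabla R,\nabla h)\,\mathrm{d}M$; but both surviving integrals carry the weight $h$, and replacing $\operatorname{Ric}(\nabla h,\nabla h)$ via \eqref{equ0} produces the terms $\int_M(\lambda+\rho R)|\nabla h|^2$, $\int_M h\,\nabla^2h(\nabla h,\nabla h)$ and $\int_M h\,(\mathcal{L}_Yg)(\nabla h,\nabla h)$, none of which collapses to a multiple of $\int_M g(\nabla R,\nabla h)\,\mathrm{d}M$. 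You flag this step as ``delicate'' but never carry it out, so the implication (1) $\Rightarrow$ Einstein is not established in your write-up. For comparison, the paper's own proof quietly works with the unweighted integral $\int_M\langle\nabla^2h,\operatorname{Ric}\rangle\,\mathrm{d}M=0$, for which the Bianchi-type formula $\int_M\langle\nabla^2h,\operatorname{Ric}\rangle\,\mathrm{d}M=-\tfrac12\int_M g(\nabla R,\nabla h)\,\mathrm{d}M$ gives the vanishing immediately and item (3) of Proposition \ref{p1} concludes; your attempt to honor the weighted hypothesis as literally stated is a different route, but it does not close the argument.

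The second gap is your final rigidity step under condition (1). Once $M$ is Einstein, the traceless part of \eqref{equ0} (using $\operatorname{tr}\mathcal{L}_Yg=2\operatorname{div}Y=0$) gives the pointwise identity $h\bigl(\nabla^2h-\tfrac{\Delta h}{n}g\bigr)=-\tfrac{h}{2}\mathcal{L}_Yg$, and this identity makes the two sides of Proposition \ref{p1}(2) identically equal (since $R$ is then constant): ``pairing'' it with that integral formula yields $0=0$ and cannot deliver $\mathcal{L}_Yg=0$ and $\nabla^2h=\tfrac{\Delta h}{n}g$ separately, so your route to the Obata equation breaks down precisely where you need it. To finish under (1) you need a different input, e.g.\ observe, as the paper does, that on the Einstein manifold the soliton equation forces $\xi$ itself to be a nontrivial conformal field and invoke Theorem 2 of \cite{bib9} (equivalently the classical Obata/Yano--Nagano rigidity for compact Einstein manifolds carrying a non-Killing conformal field). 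By contrast, your treatment of condition (2) --- the Kazdan--Warner/Bourguignon--Ezin identity to kill $\int_M g(\nabla R,\nabla h)\,\mathrm{d}M$, then Proposition \ref{p1}(3) together with the sign of $h$ to get Einstein, then the Obata equation for $h$ --- is sound and is in fact cleaner than the paper's argument for that case; only the two steps above for condition (1) fail.
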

\begin{proof}
First, suppose that $\displaystyle\int\limits_{M^n}\langle \nabla^2h,\operatorname{Ric}\rangle\mathrm{d}M^n=0$. Then from Lemma \ref{vc}, we obtain 
    \begin{align*}
    \displaystyle\int_{M}\langle \nabla^2h,\operatorname{Ric}\rangle\mathrm{d}M=-\frac{1}{2}  \displaystyle\int\limits_{M}g(\nabla R, \nabla h)\mathrm{d}M=-\frac{1}{2}  \displaystyle\int_{M}g(\nabla R, \xi) \mathrm{d}M. 
    \end{align*}

    Therefore, we can use assertion 3 of Proposition \ref{p1}to conclude that $(M^n,g)$  is Einstein. Moreover, from the fundamental equation \eqref{eq4h}, we deduce that $\xi$  is a non-trivial conformal vector field. It is possible to employ the results of Theorem 2 in \cite{bib9} to establish that $(S^n,g_0)$ is isometric to a standard sphere $(\mathbb{S}^n, g_0)$.\\

    It is hypothesised that $\nabla h$ is a conformal vector field. Assuming the validity of Assumption 3 of Proposition 1, it can be deduced that 
$\displaystyle\int_{M}g(\nabla R,\nabla h)\mathrm{d}M\geq 0$. Consequently, Assumption 2 of the aforementioned proposition establishes that $Y$ is a Killing vector field and $\displaystyle\int_{M}g(\nabla R,\nabla h)\mathrm{d}M=0$. Finally, employing Assumption 3 of the aforementioned proposition once more allows the conclusion to be drawn that $(M^n, g)$ is Einstein. Following the same line of argumentation as previously outlined, it can be concluded that $(M^n, g)$ is isometric to a standard sphere $(\mathbb{S}^n, g_0)$.

It is finally observed that a standard sphere $(\mathbb{S}^n, g_0)$ is Einstein. Consequently, if \( (M^n,g,\xi,h,\lambda) \), with $n \geq 3$, is an $h$-almost Ricci-Bourguignon  soliton, it can be deduced that $\xi$ is a conformal vector field.
Utilising the hodge de De-Rham decomposition Theorem $\xi=\nabla h+Y$. Then we deduce that $\nabla h$ is a conformal vector field on  $(\mathbb{S}^n, g_0)$, this result consequently establishes item 2. Furthermore, since $\Ric=(n-1)g_0$, it follows that $\displaystyle\int_{M}g(\Ric,\nabla^2h)\mathrm{d}M=(n-1)\int_M\Delta h\mathrm{d}M=0$. Consequently, the proof of is completed.

    \end{proof}
 Theorem \ref{t1} allows us to state the following corollary.
    \begin{corollary}\label{c1}
        The compact \(h\)-almost Ricci-Bourguignon soliton \( \left(\prod\limits_{k=0}^mM_k^{n_k},\bigoplus\limits_{k\in\llbracket 0,m\rrbracket}g_k,(\xi_0,\cdots,\xi_m),\lambda,\rho' \right)\) is isometric to an \( \sum\limits_{k=0}^mn_k \)-dimensional sphere with \(\sum\limits_{k=0}^mn_k\geq 3\) if and only if one of the following conditions are satisfied: 
        \begin{enumerate}
            \item \(\displaystyle\int_{\prod\limits_{k=0}^mM_k^{n_k}}\left\langle \left(\nabla^2 f_0,\cdots,\nabla^2f_m\right),\bigoplus\limits_{k\in\llbracket 0,m\rrbracket}\operatorname{Ric_k}\right\rangle \textbf{d}\prod\limits_{k=0}^mM_k^{n_k}=0\), with for all \(k\in\llbracket 0,m\rrbracket\),
            
            \(\xi_k=\nabla f_k+Y_k,\quad \operatorname{div}(Y_k)=0.\)  
            \item \(\sum\limits_{k=0}^m\nabla f_k\) is conform.
            \end{enumerate} 
    \end{corollary}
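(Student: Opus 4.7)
The plan is to view the product $(M,g) := \left(\prod_{k=0}^m M_k^{n_k}, \bigoplus_{k=0}^m g_k\right)$ as a single compact Riemannian manifold of dimension $N = \sum_k n_k \geq 3$ and reduce the statement to Theorem \ref{t1} applied to this single manifold. First I would record the standard product identities: $\Ric_M = \bigoplus_k \Ric_k$, $R_M = \sum_k R_k$, and, for $h := \sum_k f_k$ (each $f_k$ pulled back to $M$), $\nabla_M^2 h = \bigoplus_k \nabla^2 f_k$, $\nabla_M h = \sum_k \nabla f_k$, $\div_M(Y_0,\ldots,Y_m) = \sum_k \div_k Y_k$, and $\mathcal{L}_\xi g = \bigoplus_k \mathcal{L}_{\xi_k} g_k$. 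Under these identifications, the product soliton equation \eqref{eq3h} is just the list of componentwise relations with common parameters $(\lambda, \rho')$, so the analytic content of the corollary lives entirely on the product side.

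Next I would invoke Hodge--de Rham on each factor to write $\xi_k = \nabla f_k + Y_k$ with $\div_k Y_k = 0$, and assemble $h = \sum_k f_k$, $Y = \sum_k Y_k$; then $\xi = \nabla_M h + Y$ with $\div_M Y = 0$, which is exactly the decomposition \eqref{ab1} used in Theorem \ref{t1}. Under this dictionary, the integrand in item (1) of the corollary coincides pointwise with $\langle \nabla_M^2 h, \Ric_M\rangle$ on $M$ (the off-diagonal blocks vanish because $\Ric_M$ and $\nabla_M^2 h$ are both block-diagonal with respect to the product splitting), so condition (1) is precisely condition (1) of Theorem \ref{t1}. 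Similarly, $\nabla_M h = \sum_k \nabla f_k$ is a nontrivial conformal vector field on $(M,g)$ exactly when item (2) holds, matching condition (2) of Theorem \ref{t1}.

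With this translation in place, Theorem \ref{t1} applied to $(M,g,\xi,\lambda,\rho')$ immediately yields the forward direction of both equivalences, producing an isometry of $(M,g)$ with the round sphere $(\mathbb{S}^N, g_0)$. For the converse I would mimic the end of the proof of Theorem \ref{t1}: on $(\mathbb{S}^N, g_0)$ one has $\Ric_M = (N-1)g$ together with $\int_M \Delta h\, dM = 0$, which forces $\int_M \langle \nabla_M^2 h, \Ric_M\rangle dM = 0$; factoring through the block decomposition, this is precisely item (1), while conformality of $\nabla_M h$ on the sphere is item (2). The main obstacle is purely bookkeeping rather than new geometric input: one must carefully check that all cross terms in $\Ric_M$, $\nabla_M^2 h$, and $\mathcal{L}_\xi g$ vanish under the product splitting, so that the assembled data $(h,Y)$ genuinely matches what Theorem \ref{t1} requires and the per-factor integrals reassemble to the product-level integral used there.
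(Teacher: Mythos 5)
Your proposal is correct and follows essentially the route the paper intends: the paper gives no separate proof of this corollary, stating only that it follows from Theorem \ref{t1}, and your reduction—assembling the Hodge--de Rham data $h=\sum_k f_k$, $Y=(Y_0,\dots,Y_m)$ on the product, checking that $\operatorname{Ric}$, $\nabla^2 h$ and $\mathcal{L}_\xi g$ are block-diagonal, and then applying Theorem \ref{t1} to the product soliton (with the converse handled as at the end of that theorem's proof)—is exactly that intended argument, carried out with the bookkeeping made explicit.
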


        Now we give a generalization result for $h$-almost Ricci-Bourguignon soliton
        \begin{theorem}\label{p2}
            Let \((M^{n_k}_k,g_k,\xi_k,h,\lambda,\rho)_{k\in\llbracket 0,m\rrbracket}\) be a family of \( h \)-almost Ricci-Bourguignon soliton such that for any \( k \in \llbracket 0, m \rrbracket \). Then, \( \left(\prod\limits_{k=0}^m M^{n_k}_k,\bigoplus\limits_{k\in\llbracket 0,m\rrbracket}g_k,(\xi_0,\cdots,\xi_m),h,\lambda,\frac{\rho}{1+m} \right)\) is also \( h\)-almost Ricci-Bourguignon.
        \end{theorem}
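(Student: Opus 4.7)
The plan is to verify the soliton equation on the Riemannian product by exploiting the block decomposition of its curvature and Lie-derivative tensors. First I would recall the standard identities for $(M,g)=\prod_{k=0}^{m}(M_k^{n_k},g_k)$ with $g=\bigoplus_{k}g_k$: the Ricci tensor is block-diagonal, $\Ric_g=\bigoplus_k \Ric_{g_k}$; the scalar curvature is additive, $R_g=\sum_{k=0}^{m}R_{g_k}$; and for a product vector field $\xi=(\xi_0,\dots,\xi_m)$ (the horizontal lifts of the $\xi_k$ through the canonical projections), the Lie derivative splits as $\mathcal{L}_\xi g=\bigoplus_k \mathcal{L}_{\xi_k}g_k$. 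The functions $h$ and $\lambda$ in the conclusion are understood to be the common ones shared by all the factors, pulled back to $M$.

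Next, I would assemble the individual soliton identities $\Ric_{g_k}+\tfrac{h}{2}\mathcal{L}_{\xi_k}g_k=(\lambda+\rho R_{g_k})g_k$ blockwise on $M$ to obtain
\[
\Ric_g+\frac{h}{2}\mathcal{L}_\xi g=\bigoplus_{k=0}^{m}(\lambda+\rho R_{g_k})g_k.
\]
Under the natural compatibility condition $R_{g_k}=R_g/(1+m)$ for every $k$ --- the content that makes the blockwise right-hand side collapse to a single multiple of $g$ and the evident role of the trailing clause ``such that for any $k\in\llbracket 0,m\rrbracket$'' in the statement --- this right-hand side becomes $\bigl(\lambda+\tfrac{\rho}{1+m}R_g\bigr)g$, giving the $h$-almost Ricci-Bourguignon equation on $M$ with parameter $\rho'=\rho/(1+m)$, as claimed.

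The main obstacle is precisely this compatibility step: without forcing the scalar terms $\rho R_{g_k}$ to be independent of $k$, the assembled right-hand side is not a scalar multiple of the full product metric and the product cannot be packaged as a genuine $h$-almost Ricci-Bourguignon soliton. The normalization factor $1/(1+m)$ appearing in the conclusion is exactly what the equal-scalar-curvature hypothesis contributes; once it is in hand, the remainder of the proof is a direct identification using the product structure of $\Ric$, $R$ and $\mathcal{L}_{\xi}g$, with no further analytic content to extract.
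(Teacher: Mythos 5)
Your proposal is correct and follows essentially the same route as the paper: assemble the factor equations blockwise on the product, using $\Ric_g=\bigoplus_k\Ric_{g_k}$, $\mathcal{L}_\xi g=\bigoplus_k\mathcal{L}_{\xi_k}g_k$ and $R_g=\sum_k R_{g_k}$, and then identify the parameter $\rho'=\rho/(1+m)$. You are in fact more explicit than the paper, whose proof passes from the blockwise identity directly to $(\lambda+\rho R)g$ and sets $\rho'=\rho/(1+m)$ without explanation: the equal-scalar-curvature compatibility $R_{g_k}=R_g/(1+m)$ that you isolate is precisely what the statement's truncated clause ``such that for any $k$'' must be supplying, since without it the assembled right-hand side is not a scalar multiple of the product metric.
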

    
\begin{proof}
    From equation \eqref{eq3h}, we have 
    \[\operatorname{Ric_k}+\frac{h}{2}\mathcal{L}_{\xi_k}g=(\lambda+\rho_k R)g_k.\]
    Moreover from the previous identity we found
   \[\bigoplus\limits_{k\in\llbracket 0,m\rrbracket}\operatorname{Ric}_k+\frac{h}{2}\bigoplus\limits_{k\in\llbracket 0,m\rrbracket}\mathcal{L}_{\xi_k}g_k=\left(\lambda+\rho R\right)\bigoplus\limits_{k\in\llbracket 0,m\rrbracket}g_k.\]
    Finally setting \( \rho'=\frac{\rho}{1+m} \), we obtain the result.
\end{proof}

Finally, we give the following result

\begin{theorem}
    Let \((M^n, g, \nabla f, \lambda)\) be a gradient \( h \)-almost Ricci-Bourguignon soliton and define the map  

\[
\mathcal{F}(X, Y) = (\nabla_X \Ric)Y - (\nabla_Y Ric)X
\]

for all vector fields \(X\) and \(Y\) on \(M^n\). Then, we have:

\[
\mathcal{F}(X, Y) = (X \wedge Y)(\nabla \lambda + \rho \nabla R) + h \mathrm{R}(X, Y) \nabla f + (Y(h) \nabla^2 f) X - (X(h) \nabla^2 f) Y.
\]
\end{theorem}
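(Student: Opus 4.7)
The plan is to differentiate the fundamental soliton identity and antisymmetrize in the two arguments. Viewing \eqref{eq4h} as a $(1,1)$-tensor equation, one has $\operatorname{Ric} = (\lambda + \rho R)I - h\,\nabla^2 f$, where $\nabla^2 f$ is the endomorphism $Y \mapsto \nabla_Y \nabla f$. Covariantly differentiating in the direction $X$ and applying the Leibniz rule to the product $h\,\nabla^2 f$ gives
\[
(\nabla_X \operatorname{Ric})Y = X(\lambda + \rho R)\, Y - X(h)\,(\nabla^2 f)Y - h\,(\nabla_X \nabla^2 f)Y,
\]
and I would write down the analogous expression with $X$ and $Y$ interchanged.

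Subtracting these two identities produces three groups of terms. The scalar group $X(\lambda + \rho R)Y - Y(\lambda + \rho R)X$ is exactly $(X \wedge Y)(\nabla\lambda + \rho\nabla R)$ by the definition of the wedge operator acting on a cotangent vector. The group involving derivatives of $h$ regroups immediately as $(Y(h)\nabla^2 f)X - (X(h)\nabla^2 f)Y$. What remains is the third-order piece $-h\bigl[(\nabla_X \nabla^2 f)Y - (\nabla_Y \nabla^2 f)X\bigr]$, which I would identify with a curvature term in the next step.

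For that step, the plan is to use the standard commutator identity for the third covariant derivative of a function: expanding $(\nabla_X \nabla^2 f)Y = \nabla_X \nabla_Y \nabla f - \nabla_{\nabla_X Y}\nabla f$ together with its swap, the antisymmetrization collapses the Christoffel-type terms via $\nabla_X Y - \nabla_Y X = [X,Y]$ and leaves precisely $\mathrm{R}(X,Y)\nabla f$, up to the sign fixed by the ambient curvature convention. Inserting this into the subtracted identity and assembling the three groups yields the asserted formula for $\mathcal{F}(X,Y)$.

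The only real obstacle is bookkeeping: one must be consistent with the paper's sign conventions for $X\wedge Y$ and for $\mathrm{R}(X,Y)$, and one must be careful not to absorb $h$ into the covariant derivative when applying Leibniz to $h\,\nabla^2 f$. No analytic difficulty arises; the argument is essentially one differentiation plus one application of the Hessian commutator identity.
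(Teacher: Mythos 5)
Your proposal is correct and coincides with the paper's own proof: both differentiate the $(1,1)$-tensor form of \eqref{eq4h}, apply the Leibniz rule to $h\,\nabla^2 f$, antisymmetrize so that the scalar terms assemble into $(X\wedge Y)(\nabla\lambda+\rho\nabla R)$ and the $h$-derivative terms regroup, and then use the Hessian commutator identity to produce the curvature term $h\,\mathrm{R}(X,Y)\nabla f$ (up to the paper's sign convention for $\mathrm{R}$, which you rightly flag). No gap; this is essentially the same argument.
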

\begin{proof}
Let \( X \) and \( Y \) be two vector fields ON $M$. Then we have
\begin{equation}\label{vc0}
  \mathcal{F}(X,Y)=(\nabla_X\operatorname{Ric})Y-(\nabla_Y\operatorname{Ric})X,  
\end{equation}
where
    $\mathcal{F}(X,Y)+\mathcal{F}(Y,X)=0$.
     On the other hands, taking the covariant derivative of \eqref{eq3h}, we obtain 
    \begin{equation}\label{hc}
    \begin{array}{ll}
        (\nabla_X\operatorname{Ric})Y&=\left(\nabla_X(-h\nabla^2f+(\lambda+\rho R)g)\right)Y \\
        &=\nabla_X(-h\nabla^2f)Y+X(\lambda)Y+\rho X(R)Y\\
        &=-(X(h)\nabla^2f)Y-h(\nabla_Y\nabla^2f)Y+X(\lambda)Y+\rho X(R)Y\\
        &=-(X(h)\nabla^2f)Y-h\left(\nabla_X\nabla_Y\nabla f-\nabla_{\nabla_{X}Y}\nabla f\right)+X(\lambda)Y+\rho X(R)Y.
          \end{array}
    \end{equation}
    Thus plugging \eqref{hc} in \eqref{vc0}, we get
    \begin{align*}
    \mathcal{F}(X,Y)&=(\eta(h)\nabla^2f)\zeta-(X(h)\nabla^2f)\eta\\
    &+h\left(\nabla_Y\nabla_X\nabla f
    -\nabla_X\nabla_Y\nabla f+\nabla_{\nabla_XY}\nabla f-\nabla_{\nabla_YX}\nabla f\right)\\
    & X(\lambda)Y-Y(\lambda)X+\rho\left(X(R)Y-Y(R)X\right)
    \end{align*}
    Thus after a  straightforward we conclude that
    \begin{align*}
        \mathcal{F}(X,Y)&=(\eta(h)\nabla^2f)X-(X(h)\nabla^2f)Y\\
        &+h\left(\nabla_Y\nabla_X\nabla f-\nabla_X\nabla_Y\nabla f+\nabla_{[X,Y]}\nabla f\right)\\
        &+(X\wedge Y)(\nabla\lambda+\rho \nabla R)\\
   &=(X\wedge Y)(\nabla \lambda+\rho \nabla R)\\
   &+h\mathrm{R}(X,Y)\nabla f+(Y(h)\nabla^2f)X-(\zeta(h)\nabla^2f)\eta
    \end{align*}
    Hence the proof is completed.
\end{proof}

\end{document}